\documentclass[11pt,reqno]{amsart}
\usepackage{amsmath,amssymb}
\makeatletter \oddsidemargin.9375in \evensidemargin \oddsidemargin
\marginparwidth1.9375in \makeatother \textwidth 13cm
\topmargin.0in \textheight 19cm \pagestyle{myheadings}

\markboth {$~$ \hfill \footnotesize {\Small \rm  M. Eshaghi Gordji
 } \hfill $~$} {$~$ \hfill \footnotesize
{\rm Jordan $*-$homomorphisms between unital $C^*-$algebras}
\hfill$~$}
\begin{document}
\begin{center}
\thispagestyle{empty} \setcounter{page}{1} {\large\bf Jordan
$*-$homomorphisms between unital $C^*-$algebras \vskip.20in
{\Small \bf M. Eshaghi Gordji } \\[2mm]

{\footnotesize  $^1$Department of Mathematics,
Semnan University,\\ P. O. Box 35195-363, Semnan, Iran\\
[-1mm]e-mail: {\tt  madjid.eshaghi@gmail.com}}

}
\end{center}
\vskip 5mm \noindent{\footnotesize{\bf Abstract.} Let $A,B$ be two
unital $C^*-$algebras.  We prove that every almost unital almost
linear mapping $h:A\longrightarrow B$  which satisfies
$h(3^nuy+3^nyu) = h(3^nu)h(y)+h(y)h(3^nu)$  for all $u\in U(A)$, all
$y\in A$, and all $n = 0, 1, 2,...$, is a Jordan homomorphism. Also,
for a unital $C^*-$algebra $A$ of real rank zero, every almost
unital almost linear continuous mapping $h:A\longrightarrow B$ is a
Jordan homomorphism when $h(3^nuy+3^nyu) = h(3^nu)h(y)+h(y)h(3^nu)$
holds for all $u\in I_1(A_{sa})$, all $y\in A,$ and all $n = 0, 1,
2,...~~$. Furthermore, we investigate the Hyers--Ulam--Rassias
stability of Jordan $*-$homomorphisms between unital $C^*-$algebras
by using the fixed points methods.

 \vskip.10in
\footnotetext {2000 Mathematics Subject Classification. Primary
39B52; Secondary 39B82; 46HXX.}

\footnotetext {Keywords: Alternative fixed point;
Hyers--Ulam--Rassias stability; Jordan $*-$homomorphism.}
\vskip.10in
\newtheorem{df}{Definition}[section]
\newtheorem{rk}[df]{Remark}
\newtheorem{lem}[df]{Lemma}
\newtheorem{thm}[df]{Theorem}
\newtheorem{pro}[df]{Proposition}
\newtheorem{cor}[df]{Corollary}
\newtheorem{ex}[df]{Example}
\setcounter{section}{0} \numberwithin{equation}{section} \vskip
.2in
\begin{center}
\section{Introduction}
\end{center}

The stability of functional equations was first introduced  by S. M.
Ulam \cite{U} in 1940. More precisely, he proposed the following
problem: Given a group $G_1,$ a metric group $(G_2,d)$ and a
positive number $\epsilon$, does there exist a $\delta>0$ such that
if a function $f:G_1\longrightarrow G_2$ satisfies the inequality
$d(f(xy),f(x)f(y))<\delta$ for all $x,y\in G1,$ then there exists a
homomorphism $T:G_1\to G_2$ such that $d(f(x), T(x))<\epsilon$ for
all $x\in G_1?$ As mentioned above, when this problem has a
solution, we say that the homomorphisms from $G_1$ to $G_2$ are
stable. In 1941, D. H. Hyers \cite{H} gave a partial solution of
$Ulam^{,}s$ problem for the case of approximate additive mappings
under the assumption that $G_1$ and $G_2$ are Banach spaces. In
1978, Th. M. Rassias \cite{R1} generalized the theorem of Hyers by
considering the stability problem with unbounded Cauchy differences.
This phenomenon of stability that was introduced by Th. M. Rassias
\cite{R1} is called the Hyers--Ulam--Rassias stability. According to
Th. M. Rassias theorem:
\begin{thm}\label{t1} Let $f:{E}\longrightarrow{E'}$ be a mapping from
 a norm vector space ${E}$
into a Banach space ${E'}$ subject to the inequality
$$\|f(x+y)-f(x)-f(y)\|\leq \epsilon (\|x\|^p+\|y\|^p)$$
for all $x,y\in E,$ where $\epsilon$ and p are constants with
$\epsilon>0$ and $p<1.$ Then there exists a unique additive
mapping $T:{E}\longrightarrow{E'}$ such that
$$\|f(x)-T(x)\|\leq \frac{2\epsilon}{2-2^p}\|x\|^p$$ for all $x\in E.$
If $p<0$ then inequality $(1.3)$ holds for all $x,y\neq 0$, and
$(1.4)$ for $x\neq 0.$ Also, if the function $t\mapsto f(tx)$ from
$\Bbb R$ into $E'$ is continuous for each fixed $x\in E,$ then T
is linear.
\end{thm}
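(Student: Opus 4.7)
The plan is to follow the classical Hyers–Rassias direct method: build the additive map $T$ as the limit of a rescaled sequence $f(2^n x)/2^n$, exploiting the assumption $p<1$ to obtain a convergent geometric majorant.

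First, I would specialize the hypothesis by setting $y=x$, obtaining $\|f(2x)-2f(x)\|\le 2\eps\|x\|^p$, which after dividing by $2$ gives
$$\left\|\frac{f(2x)}{2}-f(x)\right\|\le \eps\|x\|^p.$$
A straightforward induction (replacing $x$ by $2^k x$ and chaining the estimates) then yields, for all $n\ge 1$,
$$\left\|\frac{f(2^n x)}{2^n}-f(x)\right\|\le \eps\|x\|^p\sum_{k=0}^{n-1}2^{k(p-1)}.$$
Since $p<1$ we have $2^{p-1}<1$, so the geometric series is summable with sum $\frac{1}{1-2^{p-1}}=\frac{2}{2-2^p}$, giving the crucial uniform bound.

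Next I would show that the sequence $T_n(x):=f(2^n x)/2^n$ is Cauchy in $E'$: applying the above estimate to $2^m x$ in place of $x$ and dividing by $2^m$ produces a tail $\eps\|x\|^p 2^{m(p-1)}\sum_{k=0}^{n-1}2^{k(p-1)}$ which tends to $0$ as $m\to\infty$. By completeness of $E'$, the limit $T(x):=\lim_n f(2^n x)/2^n$ exists, and passing to the limit in the displayed inequality yields the desired bound $\|f(x)-T(x)\|\le \frac{2\eps}{2-2^p}\|x\|^p$. Additivity of $T$ follows by replacing $x,y$ by $2^n x, 2^n y$ in the hypothesis, dividing by $2^n$, and letting $n\to\infty$; the right-hand side $\eps\,2^{n(p-1)}(\|x\|^p+\|y\|^p)$ vanishes in the limit since $p<1$. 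Uniqueness is obtained by assuming a second additive $T'$ with the same bound, noting $T(2^n x)=2^n T(x)$ (and likewise for $T'$), and dividing the estimate $\|T(x)-T'(x)\|\le \|T(2^n x)-f(2^n x)\|/2^n+\|f(2^n x)-T'(2^n x)\|/2^n$ by an appropriate power that forces the bound to $0$.

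For the continuity addendum, additivity already forces $T(qx)=qT(x)$ for all $q\in\Q$; then for real $t$, approximating by rationals $q_k\to t$ and using continuity of $s\mapsto f(sx)$ together with the defining limit $T(sx)=\lim_n f(2^n s x)/2^n$ shows $T(tx)=tT(x)$, so $T$ is $\R$-linear.

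The main obstacle is purely the bookkeeping in the induction and the Cauchy estimate: everything hinges on $p<1$ giving $2^{p-1}<1$, and the only subtlety is making the telescoping/geometric summation precise enough that the same estimate both produces the $2\eps/(2-2^p)$ constant and certifies the Cauchy property. The negative-$p$ case and the continuity-implies-linearity step are essentially routine once the construction of $T$ is in place.
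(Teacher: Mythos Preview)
Your argument is correct and is precisely the classical Hyers--Rassias direct method: the doubling substitution $y=x$, the telescoping estimate giving the geometric series with ratio $2^{p-1}<1$, the Cauchy-sequence construction of $T(x)=\lim_n 2^{-n}f(2^n x)$, additivity by passing to the limit in the hypothesis, uniqueness via the homogeneity $T(2^n x)=2^n T(x)$, and $\mathbb{R}$-linearity from $\mathbb{Q}$-homogeneity plus the continuity assumption. This is exactly Rassias's original 1978 proof.

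Note, however, that the paper you are working from does \emph{not} actually prove this theorem. It is stated in the introduction as a known background result, attributed to Th.~M.~Rassias \cite{R1}, and no proof is given in the paper itself. So there is no ``paper's own proof'' to compare against; your proposal simply reproduces the standard proof from the cited reference, and does so correctly.
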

During the last decades several stability problems of functional
equations have been investigated by many mathematicians. A large
list of references concerning the stability of functional
equations can be found in \cite{Cz, H-I-R, J2}.\\
D.G.~Bourgin is the first mathematician dealing with the stability
of ring homomorphisms. The topic of approximate ring homomorphisms
was studied by a number of mathematicians, see \cite{B1, B-L-Z, Bo,
H-R,M-T-N, P2, P4, P-R, R2} and references therein.

Jun and Lee \cite{J-L} proved the following: Let X and Y be Banach
spaces. Denote by $\phi:X-\{0\} \times Y-\{0\}\to [0,\infty)$ a
function such that $\tilde{\phi}(x,y)=\sum_{n=0}^\infty
3^{-n}\phi(3^nx,3^ny)< \infty$  for all $x,y\in X - \{0\}.$ Suppose
that $f:X\longrightarrow Y$ is a mapping satisfying
$$\|2f(\frac{x+y}{2})-f(x)-f(y)\|\leq \phi(x,y)$$
 for all $x,y\in X - \{0\}.$ Then there exists a
unique additive mapping $T:X\longrightarrow Y$ such that
$$\|f(x)-f(0)-T(x)\|\leq \frac{1}{3}(\tilde{\phi}(x,-x)+\tilde{\phi}(-x,3x))$$
 for all $x\in X - \{0\}.$

Recently, C. Park and W. Park \cite{P-P} applied the Jun and Lee's
result to the Jensen's equation in Banach modules over a
$C^*-$algebra. B.E. Johnson (Theorem 7.2 of \cite{J}) also
investigated almost algebra
 $*-$homomorphisms between Banach  $*-$algebras: Suppose that U and B are
Banach  $*-$algebras which satisfy the conditions of (Theorem 3.1 of
\cite{J}). Then for each positive $\epsilon$ and K there is a
positive $\delta$ such that if $T\in L(U,B)$ with $\|T\|<K,
\|T^{\vee}\|<\delta$ and $\|T(x^*)^*-T(x)\|< \delta \|x\| (x\in U)$,
then there is a $*-$homomorphism $T':U\longrightarrow B$ with
$\|T-T'\| < \epsilon$. Here $L(U,B)$ is the space of bounded linear
maps from U into B, and $T^{\vee}(x, y) = T (xy)-T(x)T(y) (x,y \in
U)$. See \cite{J} for details. Throughout this paper, let A be a
unital $C^*-$algebra with  unit e, and B a unital $C^*-$algebra. Let
$U(A)$ be the set of unitary elements in A, $A_{sa}:= \{x\in A | x =
x^*\}$, and $I_1(A_{sa})=\{v \in A_{sa} | \|v\|=1, v\in Inv(A)\}$.
In this paper, we prove that every almost unital almost linear
mapping $h:A\longrightarrow B$ is a Jordan homomorphism when
$h(3^nuy+3^nyu) = h(3^nu)h(y)+h(y)h(3^nu)$ holds for all $u\in
U(A)$, all $y\in A$, and all $n = 0, 1, 2, . . .$ , and that for a
unital $C^*-$algebra A of real rank zero (see \cite{B-P}), every
almost unital almost linear continuous mapping $h:A\longrightarrow
B$ is a Jordan homomorphism when $h(3^nuy+3^nyu) =
h(3^nu)h(y)+h(y)h(3^nu)$ holds for all $u\in I_1(A_{sa})$, all $y\in
A,$ and all $n = 0, 1, 2, . . .$. Furthermore, we investigate the
Hyers--Ulam--Rassias stability of Jordan $*-$homomorphisms between
unital $C^*-$algebras by using the fixed pint methods.

Note that a unital  $C^*-$algebra is of  real rank zero, if the set
of invertible self--adjoint elements is dense in the set of
self--adjoint elements (see \cite{B-P}). We denote the algebric
center of algebra $A$ by $Z(A)$.

 \vskip 5mm
\section{Jordan $*-$homomorphisms on unital $C^*-$algebras}
By a following similar way as in \cite{P-B-A}, we obtain the next
theorem.

\begin{thm}\label{t1}

Let $f:A\to B$ be a mapping such that $f(0)=0$ and that
$$f(3^nuy+3^nyu) = f(3^nu)f(y)+f(y)f(3^nu)\eqno (2.1)$$  for all $u\in U(A)$, all
$y\in A$, and all $n = 0, 1, 2,...$. If  there exists a function
$\phi:(A-\{0\})^2\times A\to [0,\infty)$ such that
$\tilde{\phi}(x,y,z)=\sum_{n=0}^\infty 3^{-n}\phi(3^nx,3^ny,3^nz)<
\infty$ for all $x,y\in A - \{0\}$ and all $z\in A$ and that
$$\|2f(\frac{\mu x+\mu y}{2})-\mu f(x)-\mu f(y)+f(u^*)-f(u)^*\| \leq \phi(x,y,u), \eqno (2.2)$$
for all $\mu \in \Bbb T$ and all $x,y \in A, u\in (U(A)\cup \{0\}).$
If $\lim_n\frac{f(3^ne)}{3^n} \in U(B)\cap Z(B),$ then the mapping
$f:A\to B$ is a Jordan $*-$homomorphism.
\end{thm}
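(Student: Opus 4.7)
The plan is to recover from $f$ a $\mathbb{C}$-linear additive map $T$ via a Jensen/Jun--Lee argument, then to use the Jordan-type relation $(2.1)$ together with the central-unitary hypothesis on $c:=\lim_{n}3^{-n}f(3^{n}e)$ to force $f=T$, and finally to verify the Jordan and $*$ properties.

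First, I would set $\mu=1$ and $u=0$ in $(2.2)$ so that the term $f(u^{*})-f(u)^{*}$ vanishes and, using $f(0)=0$, one recovers the Jensen inequality $\|2f(\tfrac{x+y}{2})-f(x)-f(y)\|\leq\phi(x,y,0)$. Applying the Jun--Lee theorem quoted in the introduction then supplies a unique additive $T:A\to B$ with $T(x)=\lim_{n}3^{-n}f(3^{n}x)$ and an explicit $\tilde{\phi}$-controlled bound on $\|f(x)-T(x)\|$. To upgrade $T$ to $\mathbb{C}$-linearity, I would put $x=y$ and $u=0$ in $(2.2)$, replace $x$ by $3^{n}x$, divide by $3^{n}$, and let $n\to\infty$; summability of $\tilde\phi$ kills the right-hand side and yields $T(\mu x)=\mu T(x)$ for every $\mu\in\mathbb{T}$. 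A standard trick writing any $\lambda\in\mathbb{C}$ as $\tfrac{M}{2}(e^{i\alpha}+e^{-i\alpha})$ for suitable $M\in\mathbb{N}$ and $\alpha\in\mathbb{R}$ then combines $\mathbb{T}$-homogeneity with additivity into full $\mathbb{C}$-linearity of $T$.

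The key new ingredient is $(2.1)$ with $u=e$, which reads $f(2\cdot 3^{n}y)=f(3^{n}e)f(y)+f(y)f(3^{n}e)$. Dividing by $3^{n}$ and passing to the limit, centrality of $c$ collapses the right-hand side to $2cf(y)$ while the left-hand side tends to $T(2y)=2T(y)$; hence $T(y)=cf(y)$, so $f=c^{-1}T$. Plugging $y=3^{m}e$ into this identity forces $f(3^{m}e)=3^{m}\cdot 1_{B}$, whence $c=\lim_{n}3^{-n}f(3^{n}e)=1_{B}$, and therefore $f=T$ is $\mathbb{C}$-linear. With linearity in hand, the $n=0$ instance of $(2.1)$ reads $f(uy+yu)=f(u)f(y)+f(y)f(u)$ for every $u\in U(A)$, and since every element of a unital $C^{*}$-algebra is a $\mathbb{C}$-linear combination of unitaries, the Jordan identity extends to all of $A\times A$.

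For the $*$-property, linearity of $f$ causes the first three terms of $(2.2)$ to cancel, leaving $\|f(u^{*})-f(u)^{*}\|\leq\phi(x,y,u)$ for all $x,y\in A\setminus\{0\}$ and $u\in U(A)$. The main obstacle will be driving the right-hand side to zero: the plan is to exploit the arbitrariness of $(x,y)$ together with the summability condition $\tilde\phi<\infty$ (for instance by rescaling $x\mapsto 3^{-n}x$, $y\mapsto 3^{-n}y$ and extracting convergence from tails of $\tilde\phi$) so as to obtain $f(u^{*})=f(u)^{*}$ on $U(A)$. Linearity of $f$ together with the decomposition of every self-adjoint element as a real-linear combination of unitaries would then propagate involution-preservation to all of $A$, completing the verification that $f$ is a Jordan $*$-homomorphism.
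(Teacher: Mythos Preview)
Your route to $f=T$ is a legitimate and somewhat cleaner variant of the paper's. The paper first establishes two identities for every unitary $u$: the ``mixed'' relation $T(uy+yu)=T(u)f(y)+f(y)T(u)$ (directly from $(2.1)$) and the ``pure'' relation $T(uy+yu)=T(u)T(y)+T(y)T(u)$ (by replacing $y$ with $3^{n}y$ and taking a second limit), and then compares them at $u=e$ using invertibility and centrality of $T(e)$ to conclude $T=f$. You instead use only $u=e$ in $(2.1)$ to get $T(y)=cf(y)$ and then feed $y=3^{m}e$ back in to force $c=1_{B}$. This avoids the second limiting step, gives the bonus that $f$ is unital, and lets you read the Jordan identity for unitaries straight off the $n=0$ case of $(2.1)$; the extension to general $x$ via the unitary decomposition is then the same as in the paper.

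The $*$-preservation step, however, has a genuine gap. Once $f$ is linear, $(2.2)$ does collapse to $\|f(u^{*})-f(u)^{*}\|\le\phi(x,y,u)$ for all $x,y\ne 0$, but the summability hypothesis $\tilde\phi(x,y,z)=\sum_{n}3^{-n}\phi(3^{n}x,3^{n}y,3^{n}z)<\infty$ scales all three variables together and gives no control over $\phi$ with the unitary slot held fixed. Your proposed rescaling $x\mapsto 3^{-n}x$, $y\mapsto 3^{-n}y$ does not extract smallness from tails of $\tilde\phi$: concretely, for the power control $\phi(x,y,z)=\theta(\|x\|^{p}+\|y\|^{p}+\|z\|^{p})$ of Corollary~2.2 one has $\inf_{x,y\neq 0}\phi(x,y,u)=\theta\|u\|^{p}=\theta>0$, so the bound never drops to zero. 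The paper handles $*$-preservation differently, establishing it for $T$ \emph{before} identifying $T$ with $f$, by invoking the argument of Park--Boo--An; you should either adopt that route or supply a new mechanism, since as written this step does not close.
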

\begin{proof}Put $u=0, \mu=1$ in (2.2), it follows from Theorem 1 of \cite{J-L} that there exists a unique
additive mapping $T:A\to B$ such that
$$\|f(x)-T(x)\|\leq \frac{1}{3}(\tilde{\phi}(x,-x,0)+\tilde{\phi}(-x,3x,0))\eqno (2.3)$$
for all $x\in A-\{0\}.$ This mapping is given by
$$T(x)=\lim_n \frac{f(3^nx)}{3^n}$$
for all $x\in A.$ By the same reasoning as the proof of Theorem 1 of
\cite{P-B-A}, $T$ is $\Bbb C-$linear and $*-$preserving.  It follows
from (2.1) that
$$T(uy+yu)=\lim_n \frac{f(3^nuy+3^nyu)}{3^n}=\lim_n [\frac{f(3^nu)}{3^n}f(y)+f(y)\frac{f(3^nu)}{3^n}]
=T(u)f(y)+f(y)T(u)\eqno(2.4)$$ for all $u\in U(A)$, all $y\in A$.
Since $T$ is additive, then by (2.4), we have
$$3^nT(uy+yu)=T(u(3^ny)+(3^ny)u)=T(u)f(3^ny)+f(3^ny)T(u)$$
for all $u\in U(A)$ and all $y\in A$. Hence,
$$T(uy+yu)=\lim_n [T(u)\frac{f(3^ny)}{3^n}+\frac{f(3^ny)}{3^n}T(u)]
=T(u)T(y)+T(y)T(u)\eqno(2.5)$$ for all $u\in U(A)$ and all $y\in A$.
By the assumption, we have $$T(e)=\lim_n\frac{f(3^ne)}{3^n} \in
U(B)\cap Z(B)$$ hence, it follows by (2.4) and (2.5) that
$$2T(e)T(y)=T(e)T(y)+T(y)T(e)=T(ye+ey)=T(e)f(y)+f(y)T(e)=2T(e)f(y)$$
for all $y\in A.$ Since $T(e)$ is invertible, then $T(y)=f(y)$ for
all $y\in A.$ We have to show that $f$ is Jordan homomorphism. To
this end, let $x\in A.$ By Theorem 4.1.7 of \cite{K-R}, $x$ is a
finite linear combination of unitary elements, i.e.,
$x=\sum_{j=1}^nc_ju_j ~~(c_j\in \Bbb C, u_j \in U(A)),$ it follows
from (2.5) that
\begin{align*}f(xy+yx)&=T(xy+yx)=T(\sum_{j=1}^nc_ju_jy+\sum_{j=1}^nc_jyu_j)=\sum_{j=1}^nc_jT(u_jy+yu_j)\\
&=\sum_{j=1}^nc_j(T(u_jy)+T(yu_j))=\sum_{j=1}^nc_j(T(u_j)T(y)+T(y)T(u_j))\\
&=T(\sum_{j=1}^nc_ju_j)T(y)+T(y)T(\sum_{j=1}^nc_ju_j)=T(x)T(y)+T(y)T(x)\\
&=f(x)f(y)+f(y)f(x)
\end{align*}
for all $y\in A$. And this completes the proof of theorem.
\end{proof}
\begin{cor}\label{t2}
Let $p\in (0,1), \theta \in [0,\infty)$ be real numbers. Let $f:A\to
B$ be a mapping such that $f(0)=0$ and that
$$f(3^nuy+3^nyu) = f(3^nu)f(y)+f(y)f(3^nu)$$  for all $u\in U(A)$, all
$y\in A$, and all $n = 0, 1, 2,...$. Suppose that
$$\|2f(\frac{\mu x+\mu y}{2})-\mu f(x)-\mu f(y)+f(z^*)-f(z)^*\| \leq \theta (\|x\|^p+\|y\|^p+\|z\|^p)$$
for all $\mu \in \Bbb T$ and all $x,y,z \in A.$ If
$\lim_n\frac{f(3^ne)}{3^n} \in U(B)\cap Z(B),$ then the mapping
$f:A\to B$ is a Jordan $*-$homomorphism.

\end{cor}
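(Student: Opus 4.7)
The plan is to reduce the corollary to the preceding theorem by an appropriate choice of the control function $\phi$. Specifically, I would set $\phi(x,y,z) := \theta(\|x\|^p + \|y\|^p + \|z\|^p)$ for all $x,y \in A - \{0\}$ and all $z \in A$. With this choice, the hypothesized inequality of the corollary is exactly the inequality (2.2) of the preceding theorem (restricted from ``all $z \in A$'' to $u \in U(A)\cup\{0\} \subset A$, which is certainly permissible since the corollary's hypothesis is strictly stronger in that respect). The multiplicative identity (2.1) and the assumption $\lim_n f(3^n e)/3^n \in U(B)\cap Z(B)$ are carried over verbatim, and the hypothesis $f(0)=0$ is shared.

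The only step requiring computation is the verification that $\tilde{\phi}$ is finite. By the homogeneity of the norm, $3^{-n}\phi(3^n x, 3^n y, 3^n z) = \theta\cdot 3^{n(p-1)}(\|x\|^p + \|y\|^p + \|z\|^p)$. Since $p < 1$ we have $3^{p-1} < 1$, so summing the resulting geometric series yields $\tilde{\phi}(x,y,z) = \frac{3\theta}{3 - 3^p}(\|x\|^p + \|y\|^p + \|z\|^p) < \infty$ for all $x,y \in A - \{0\}$ and all $z \in A$, as required.

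With every hypothesis of the preceding theorem now verified, I would simply invoke it to conclude that $f$ is a Jordan $*$-homomorphism. I do not anticipate any serious obstacle: the corollary is a routine specialization, and the main (indeed only) nontrivial ingredient is the geometric-series estimate above, which works precisely because of the assumption $p < 1$. No further structural input is needed beyond the theorem itself.
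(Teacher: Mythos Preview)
Your proposal is correct and follows exactly the approach of the paper: define $\phi(x,y,z):=\theta(\|x\|^p+\|y\|^p+\|z\|^p)$ and invoke Theorem~2.1. You are in fact more explicit than the paper, which omits the geometric-series verification of $\tilde{\phi}(x,y,z)<\infty$ that you (correctly) supply.
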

\begin{proof}
Setting $\phi(x,y,z):=\theta(\|x\|^p+\|y\|^p+\|z\|^p)$ all $x,y,z
\in A.$ Then by Theorem 2.1 we get the desired result.
\end{proof}

\begin{thm}\label{t3}
Let $A$ be a $C^*-$algebra of real rank zero. Let $f:A\to B$ be a
continuous  mapping such that $f(0)=0$ and that
$$f(3^nuy+3^nyu) = f(3^nu)f(y)+f(y)f(3^nu)\eqno (2.6)$$  for all $u\in I_1(A_{sa})$, all
$y\in A$, and all $n = 0, 1, 2,...$. Suppose that  there exists a
function $\phi:(A-\{0\})^2\times A\to [0,\infty)$ satisfying (2.2)
and $\tilde{\phi}(x,y,z)< \infty$ for all $x,y\in A - \{0\}$ and all
$z\in A$. If $\lim_n\frac{f(3^ne)}{3^n} \in U(B)\cap Z(B),$ then the
mapping $f:A\to B$ is a Jordan $*-$homomorphism.
\end{thm}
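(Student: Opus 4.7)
The plan is to follow the strategy of the proof of Theorem~2.1 and replace its final step (which invoked the decomposition of an arbitrary element of a $C^*$-algebra as a finite linear combination of unitaries) by an approximation argument that exploits the real rank zero hypothesis together with the continuity of $f$.

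First, applying the hypothesis (2.2) with $u=0$ and $\mu=1$ and invoking the Jun--Lee theorem, I would produce a unique additive mapping $T:A\to B$ given by $T(x)=\lim_n f(3^n x)/3^n$ and satisfying an estimate analogous to (2.3). The same reasoning used in Theorem~2.1 then shows that $T$ is $\Bbb C$-linear and $*$-preserving. Using (2.6) together with the additivity of $T$ exactly as in equations (2.4) and (2.5), I would obtain
$$T(uy+yu)=T(u)T(y)+T(y)T(u)$$
for every $u\in I_1(A_{sa})$ and every $y\in A$. Since the unit $e$ belongs to $I_1(A_{sa})$ (it is self-adjoint, invertible and of norm one), the computation using $T(e)\in U(B)\cap Z(B)$ carries over without change and forces $T(y)=f(y)$ for all $y\in A$; in particular, $f$ is $\Bbb C$-linear.

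The genuinely new step is to promote the Jordan identity $f(xy+yx)=f(x)f(y)+f(y)f(x)$ from $x\in I_1(A_{sa})$ to arbitrary $x\in A$. Given a self-adjoint element $h\in A_{sa}$, the real rank zero hypothesis provides a sequence $(h_n)\subset A_{sa}\cap \mathrm{Inv}(A)$ with $h_n\to h$. Writing $v_n:=h_n/\|h_n\|\in I_1(A_{sa})$ whenever $h_n\neq 0$, the $\Bbb C$-linearity of $f=T$ and the identity already proven at $v_n$ yield
$$f(h_n y+y h_n)=\|h_n\|\bigl(f(v_n)f(y)+f(y)f(v_n)\bigr)=f(h_n)f(y)+f(y)f(h_n).$$
Letting $n\to\infty$ and using the continuity of $f$ together with continuity of the product in $B$, I obtain the Jordan identity for every $h\in A_{sa}$ and every $y\in A$. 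Finally, decomposing an arbitrary $x\in A$ as $x=x_1+i x_2$ with $x_1,x_2\in A_{sa}$ and invoking complex linearity of $f$ on the left-hand side gives the result for all $x,y\in A$, so that $f$ is a Jordan $*$-homomorphism.

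The main obstacle is precisely this last approximation step: it is the only place where the two new hypotheses of Theorem~2.3 (real rank zero of $A$ and continuity of $f$) come into play in an essential way, and one must verify carefully that the convergence $h_n\to h$ is preserved by the products $h_n y$, $y h_n$, $f(h_n) f(y)$, and $f(y) f(h_n)$; the remaining pieces of the argument are parallel to Theorem~2.1 once the identity on $I_1(A_{sa})$ is in hand.
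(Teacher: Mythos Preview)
Your proposal is correct and follows essentially the same strategy as the paper's proof: construct $T$ via Jun--Lee, show it is $\Bbb C$-linear and $*$-preserving, obtain the Jordan identity on $I_1(A_{sa})$, use $T(e)\in U(B)\cap Z(B)$ to force $T=f$ (hence $T$ continuous), and then exploit real rank zero and continuity to extend the identity to all of $A$ via the decomposition $x=x_1+ix_2$. The only cosmetic difference is the order of the final step: the paper first passes to the limit on the unit sphere of $A_{sa}$ (using that $I_1(A_{sa})$ is dense there) and afterwards scales by $\|x_j\|$, splitting into three cases according to which of $x_1,x_2$ vanishes, whereas you approximate an arbitrary $h\in A_{sa}$ directly by invertible self-adjoints, normalize those, and take the limit---your version avoids the case split and is slightly more streamlined, but the content is the same.
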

\begin{proof}
By the same reasoning as the proof of Theorem 2.1, there exists a
unique involutive $\Bbb C-$linear mapping $T:A \to B$ satisfying
(2.3). It follows from (2.6) that
$$T(uy+yu)=\lim_n \frac{f(3^nuy+3^nyu)}{3^n}=\lim_n [\frac{f(3^nu)}{3^n}f(y)+f(y)\frac{f(3^nu)}{3^n}]
=T(u)f(y)+f(y)T(u)\eqno(2.7)$$ for all $u\in I_1(A_{sa})$, and all
$y\in A$. By additivity of $T$  and (2.7), we obtain that
$$3^nT(uy+yu)=T(u(3^ny)+(3^ny)u)=T(u)f(3^ny)+f(3^ny)T(u)$$
for all $u\in I_1(A_{sa})$ and all $y\in A$. Hence,
$$T(uy+yu)=\lim_n [T(u)\frac{f(3^ny)}{3^n}+\frac{f(3^ny)}{3^n}T(u)]
=T(u)T(y)+T(y)T(u)\eqno(2.8)$$ for all $u\in I_1(A_{sa})$ and all
$y\in A$. By the assumption, we have
$$T(e)=\lim_n\frac{f(3^ne)}{3^n} \in U(B)\cap Z(B).$$  Similar
to the proof of Theorem 2.1, it follows from (2.7) and (2.8) that
$T=f$ on $A$.  So $T$ is continuous. On the other hand $A$ is real
rank zero. On can easily show that $I_1(A_{sa})$ is dense in $\{x\in
A_{sa}: \|x\|=1\}$. Let $v\in \{x\in A_{sa}: \|x\|=1\}$. Then there
exists a sequence $\{z_n\}$ in $I_1(A_{sa})$ such that $\lim_n
z_n=v.$ Since $T$ is continuous, it follows from (2.8) that
\begin{align*}T(vy+yv)&=T(\lim_n (z_ny+yz_n))= \lim_n T(z_ny+yz_n)\\
&= \lim_n T(z_n)T(y)+\lim_n T(y)T(z_n)\\
&=
 T(\lim_n z_n)T(y)+ T(y)T(\lim_n z_n)\\
 &=T(v)T(y)+T(y)T(v) \hspace{7cm} (2.9)
\end{align*}
for all $y\in A$. Now, let $x\in A$. Then we have $x=x_1+ix_2,$
where $x_1:=\frac{x+x^*}{2}$ and $x_2=\frac{x-x^*}{2i}$ are
self--adjoint.

First consider  $x_1=0, x_2\neq 0.$ Since $T$ is $\Bbb C-$linear, it
follows from (2.9) that
\begin{align*}f(xy+yx)&=T(xy+yx)=T(ix_2y+y(ix_2))=T(i\|x_2\|\frac{x_2}{\|x_2\|}y+y(i\|x_2\|\frac{x_2}{\|x_2\|}))\\
&=i\|x_2\|T(\frac{x_2}{\|x_2\|}y+y\frac{x_2}{\|x_2\|})=i\|x_2\|[T(\frac{x_2}{\|x_2\|})T(y)+T(y)T(\frac{x_2}{\|x_2\|})]\\
&=T(i\|x_2\|\frac{x_2}{\|x_2\|})T(y)+T(y)T(i\|x_2\|\frac{x_2}{\|x_2\|})\\
&=T(ix_2)T(y)+T(y)T(ix_2)=T(x)T(y)+T(y)T(x)\\
&=f(x)f(y)+f(y)f(x)
\end{align*}
for all $y\in A$.

 If $x_2=0, x_1\neq 0,$ then by  (2.9), we have

\begin{align*}f(xy+yx)&=T(xy+yx)=T(x_1y+y(x_1))=T(\|x_1\|\frac{x_1}{\|x_1\|}y+y(\|x_1\|\frac{x_1}{\|x_1\|}))\\
&=\|x_1\|T(\frac{x_1}{\|x_1\|}y+y\frac{x_1}{\|x_1\|})=\|x_1\|[T(\frac{x_1}{\|x_1\|})T(y)+T(y)T(\frac{x_1}{\|x_1\|})]\\
&=T(\|x_1\|\frac{x_1}{\|x_1\|})T(y)+T(y)T(\|x_1\|\frac{x_1}{\|x_1\|})=T(x_1)T(y)+T(y)T(x_1)\\
&=T(x)T(y)+T(y)T(x)=f(x)f(y)+f(y)f(x)
\end{align*}
for all $y\in A$.

 Finally, consider the case that $x_1\neq 0, x_2\neq 0.$ Then it follows from (2.9) that
\begin{align*}f(xy+yx)&=T(xy+yx)=T(x_1y+ix_2y+yx_1+y(ix_2))\\
&=T(\|x_1\|\frac{x_1}{\|x_1\|}y+y(\|x_1\|\frac{x_1}{\|x_1\|})
+T(i\|x_2\|\frac{x_2}{\|x_2\|}y+y(i\|x_2\|\frac{x_2}{\|x_2\|}))\\
&=\|x_1\|T(\frac{x_1}{\|x_1\|}y+y\frac{x_1}{\|x_1\|})+i\|x_2\|T(\frac{x_2}{\|x_2\|}y+y\frac{x_2}{\|x_2\|})\\
&=\|x_1\|[T(\frac{x_1}{\|x_1\|})T(y)+T(y)T(\frac{x_1}{\|x_1\|})]+
i\|x_2\|[T(\frac{x_2}{\|x_2\|})T(y)+T(y)T(\frac{x_2}{\|x_2\|})]\\
&=[T(\|x_1\|\frac{x_1}{\|x_1\|})+T(i\|x_2\|\frac{x_2}{\|x_2\|})]T(y)+T(y)[T(\|x_1\|\frac{x_1}{\|x_1\|})+
T(i\|x_2\|\frac{x_2}{\|x_2\|})]\\
&=[T(x_1)+T(ix_2)]T(y)+T(y)[T(x_1)+T(ix_2)]\\
&=T(x)T(y)+T(y)T(x)=f(x)f(y)+f(y)f(x)
\end{align*}
for all $y\in A$. Hence, $f(xy+yx)=f(x)f(y)+f(y)f(x)$ for all
$x,y\in A$, and $f$ is Jordan $*-$homomorphism.
\end{proof}

\begin{cor}\label{t2}
Let $A$ be a $C^*-$algebra of rank zero. Let $p\in (0,1), \theta \in
[0,\infty)$ be real numbers. Let $f:A\to B$ be a mapping such that
$f(0)=0$ and that
$$f(3^nuy+3^nyu) = f(3^nu)f(y)+f(y)f(3^nu)$$  for all $u\in I_1(A_{sa})$, all
$y\in A$, and all $n = 0, 1, 2,...$. Suppose that
$$\|2f(\frac{\mu x+\mu y}{2})-\mu f(x)-\mu f(y)+f(z^*)-f(z)^*\| \leq \theta (\|x\|^p+\|y\|^p+\|z\|^p)$$
for all $\mu \in \Bbb T$ and all $x,y,z \in A.$ If
$\lim_n\frac{f(3^ne)}{3^n} \in U(B)\cap Z(B),$ then the mapping
$f:A\to B$ is a Jordan $*-$homomorphism.

\end{cor}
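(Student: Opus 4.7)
The plan is to deduce this corollary directly from Theorem 2.3 by specializing the control function $\phi$. Concretely, I would set
\[
\phi(x,y,z) := \theta\bigl(\|x\|^p + \|y\|^p + \|z\|^p\bigr)
\]
for all $x,y,z\in A$, so that the displayed inequality in the corollary is exactly the inequality (2.2) required in Theorem 2.3. The multiplicative hypothesis on $f$ involving unitaries from $I_1(A_{sa})$, the continuity hypothesis on $f$, the real rank zero assumption on $A$, and the condition $\lim_n 3^{-n}f(3^n e)\in U(B)\cap Z(B)$ are all assumed verbatim, so there is nothing to check for those.

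The only genuine computation is the verification that $\tilde{\phi}(x,y,z) = \sum_{n=0}^{\infty} 3^{-n}\phi(3^n x, 3^n y, 3^n z)$ is finite on $(A\setminus\{0\})^2 \times A$. Substituting the chosen $\phi$ gives
\[
\tilde\phi(x,y,z) = \theta\bigl(\|x\|^p + \|y\|^p + \|z\|^p\bigr)\sum_{n=0}^{\infty} 3^{n(p-1)},
\]
and since $p\in(0,1)$ we have $3^{p-1}<1$, so the geometric series converges to $1/(1-3^{p-1})$. Hence $\tilde\phi$ is finite on the required domain.

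With all the hypotheses of Theorem 2.3 verified, the conclusion of that theorem applies directly, giving that $f$ is a Jordan $*$-homomorphism. There is no main obstacle: the corollary is essentially a packaging of Theorem 2.3 for the prototypical Rassias-type control function, and the only nontrivial point is the elementary geometric-series estimate above. I would therefore write the proof as a single short paragraph: define $\phi$, note the geometric series converges because $p<1$, and invoke Theorem 2.3.
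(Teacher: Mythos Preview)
Your proposal is correct and takes exactly the same approach as the paper: set $\phi(x,y,z):=\theta(\|x\|^p+\|y\|^p+\|z\|^p)$ and invoke Theorem~2.3. In fact you are more thorough than the paper, which omits the geometric-series verification of $\tilde\phi<\infty$; one small caveat is that continuity of $f$ is not stated ``verbatim'' in the corollary (it is missing from the paper's statement) even though, as you note, it is needed to apply Theorem~2.3.
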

\begin{proof}
Setting $\phi(x,y,z):=\theta(\|x\|^p+\|y\|^p+\|z\|^p)$ all $x,y,z
\in A.$ Then by Theorem 2.3 we get the desired result.
\end{proof}

 \vskip 5mm
\section{stability of Jordan $*-$homomorphisms; a fixed point approach}

 We  investigate  the generalized Hyers--Ulam--Rassias
stability of Jordan $*-$homomorphisms on unital $C^*-$algebras by
using the alternative fixed point.

Recently, C$\breve{a}$dariu and Radu  applied the fixed point method
to the investigation of the  functional equations. (see also
\cite{C-R1, C-R2, C-R3, P-R, Ra, Ru}). Before proceeding to the main
result of this section,  we will state the following theorem.

\begin{thm}\label{t2}(The alternative of fixed point \cite{C-R}).
Suppose that we are given a complete generalized metric space
$(\Omega,d)$ and a strictly contractive mapping
$T:\Omega\rightarrow\Omega$ with Lipschitz constant $L$. Then for
each given $x\in\Omega$, either\\

$d(T^m x, T^{m+1} x)=\infty~$ for all $m\geq0,$\\
 or other exists a natural number $m_{0}$ such that\\

 $d(T^m x, T^{m+1} x)<\infty ~$for all $m \geq m_{0};$ \\

  the sequence $\{T^m x\}$ is convergent to a fixed point $y^*$ of $~T$;\\

 $y^*$is the unique fixed point of $~T$ in the
set $~\Lambda=\{y\in\Omega:d(T^{m_{0}} x, y)<\infty\};$\\

$ d(y,y^*)\leq\frac{1}{1-L}d(y, Ty)$ for all $~y\in\Lambda.$
\end{thm}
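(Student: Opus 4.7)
The plan is to adapt the classical Banach contraction mapping argument to the generalized metric setting, where $d$ is allowed to take the value $+\infty$. First I would establish the dichotomy: if $d(T^m x, T^{m+1} x) = \infty$ for every $m \geq 0$, the first alternative holds immediately. Otherwise let $m_0$ be the least index with $d(T^{m_0} x, T^{m_0+1} x) < \infty$. Applying strict contractivity inductively yields
$$d(T^m x, T^{m+1} x) \;\leq\; L^{m - m_0}\, d(T^{m_0} x, T^{m_0+1} x) \;<\; \infty$$
for every $m \geq m_0$, which verifies the first bullet of the second alternative and sets up a geometric tail.

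Next I would establish convergence. For $p > q \geq m_0$, the triangle inequality combined with the geometric bound above gives
$$d(T^q x, T^p x) \;\leq\; \sum_{k=q}^{p-1} L^{k-m_0}\, d(T^{m_0} x, T^{m_0+1} x),$$
which tends to $0$ as $q \to \infty$ because $L < 1$. Thus $\{T^m x\}_{m \geq m_0}$ is Cauchy, and completeness of $(\Omega, d)$ produces a limit $y^* \in \Omega$. Strict contractivity makes $T$ Lipschitz—hence continuous on any orbit of finite-distance points—so passing to the limit in the identity $T^{m+1} x = T(T^m x)$ yields $T y^* = y^*$.

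For uniqueness inside $\Lambda$, I would suppose $y'$ is another fixed point in $\Lambda$. Since $y^*$ is at finite distance from $T^{m_0} x$ (via the geometric tail just bounded) and $y'$ is at finite distance from $T^{m_0} x$ (by membership in $\Lambda$), the triangle inequality gives $d(y^*, y') < \infty$. Then $d(y^*, y') = d(Ty^*, Ty') \leq L\, d(y^*, y')$ forces $d(y^*, y') = 0$. Finally, for the quantitative estimate I would apply the triangle inequality to write $d(y, y^*) \leq d(y, Ty) + d(Ty, Ty^*) \leq d(y, Ty) + L\, d(y, y^*)$; since $y \in \Lambda$ guarantees $d(y, y^*) < \infty$, I can transfer the last term to the left and obtain $d(y, y^*) \leq \tfrac{1}{1-L}\, d(y, Ty)$.

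The delicate point, and the main source of care throughout, is bookkeeping with possibly infinite distances: every invocation of the triangle inequality, every summation, and the absorption step in the final estimate must act on quantities already known to be finite. This is precisely the reason $\Lambda$ is defined using $T^{m_0} x$ rather than $x$ itself—it isolates the connected component on which all distances to the iterates are finite, which is exactly the regime where the Banach-style argument is meaningful.
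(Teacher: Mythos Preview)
Your proposal is a correct and standard proof of the fixed-point alternative; the dichotomy, the geometric Cauchy estimate, the continuity argument for the fixed-point property, the uniqueness via finite-distance bookkeeping in $\Lambda$, and the absorption inequality are all handled properly, with appropriate care taken that every manipulation involves quantities already known to be finite.

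However, there is nothing in the paper to compare it against: the paper does not prove this theorem. It is quoted verbatim as a result from the literature (C\u{a}dariu--Radu \cite{C-R}) and is invoked as a black box in the proof of Theorem~\ref{t31}. So your write-up supplies a proof where the paper simply cites one; the approach you give is the classical one and matches what one finds in the cited source.
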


\begin{thm}\label{t31}
Let $f:A\to B$ be a mapping with $f(0)=0$ for which there exists a
function $\phi:A^5\to [0,\infty)$ satisfying

$$\|f(\frac{\mu x+\mu y+\mu z}{3})+f(\frac{\mu x-2\mu y+\mu z}{3})+f(\frac{\mu x+\mu y-2\mu z}{3})-
\mu f(x)+f(uv+uv)$$ $$-f(v)f(u)-f(u)f(v)+f(w^*)-f(w)^*\|
\leq\phi(x,y,z,u,v,w),\eqno(3.1)$$ for all $\mu\in \Bbb T,$ and all
$x,y,z,u,v \in A, w\in U(A)\cup \{0\}.$ If  there exists an $L<1$
such that $\phi(x,y,z,u,v,w)\leq 3L
\phi(\frac{x}{3},\frac{y}{3},\frac{z}{3},\frac{u}{3},\frac{v}{3},\frac{w}{3})$
for all $x,y,z,u,v,w\in A,$
 then there exists a unique Jordan $*-$homomorphism $h:A\to
B$ such that
$$\|f(x)-h(x)\| \leq \frac{L}{1-L}\phi(x,0,0,0,0,0)\eqno (3.2)$$ for all $x \in A.$
\end{thm}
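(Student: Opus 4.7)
The plan is to apply the alternative of fixed point (Theorem~3.1) on a suitable complete generalized metric space of functions, following the C\u{a}dariu--Radu strategy. I would set $\Omega=\{g:A\to B\mid g(0)=0\}$ and equip it with the generalized metric
\[
d(g,g')=\inf\bigl\{C\in[0,\infty]\,:\,\|g(x)-g'(x)\|\le C\,\phi(x,0,0,0,0,0)\text{ for all }x\in A\bigr\},
\]
under which $(\Omega,d)$ is complete. Then define $J:\Omega\to\Omega$ by $(Jg)(x)=\tfrac{1}{3}g(3x)$; the hypothesis $\phi(x,y,z,u,v,w)\le 3L\,\phi(x/3,y/3,z/3,u/3,v/3,w/3)$ immediately yields $d(Jg,Jg')\le L\,d(g,g')$, so $J$ is strictly contractive with constant $L$.

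To put Theorem~3.1 to work, I would specialize (3.1) with $\mu=1$ and $y=z=u=v=w=0$, obtaining $\|3f(x/3)-f(x)\|\le\phi(x,0,0,0,0,0)$. Replacing $x$ by $3x$ and dividing by $3$ gives $\|Jf(x)-f(x)\|\le L\,\phi(x,0,0,0,0,0)$, i.e.\ $d(f,Jf)\le L<\infty$. The alternative of fixed point then supplies a unique $h$ in the set $\Lambda=\{g\in\Omega:d(f,g)<\infty\}$ with $Jh=h$, realized as $h(x)=\lim_n f(3^n x)/3^n$, and satisfying $d(f,h)\le d(f,Jf)/(1-L)\le L/(1-L)$, which is exactly the estimate (3.2); uniqueness of $h$ comes for free.

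It remains to verify that $h$ is a Jordan $*$-homomorphism. For additivity and $\mathbb{C}$-linearity, I would put $u=v=w=0$ in (3.1), substitute $(3^n x,3^n y,3^n z)$ for $(x,y,z)$, and divide by $3^n$; the contractivity of $\phi$ forces the bound to decay like $L^n$, so in the limit
\[
h\!\left(\tfrac{\mu x+\mu y+\mu z}{3}\right)+h\!\left(\tfrac{\mu x-2\mu y+\mu z}{3}\right)+h\!\left(\tfrac{\mu x+\mu y-2\mu z}{3}\right)=\mu h(x)
\]
for all $\mu\in\mathbb{T}$ and $x,y,z\in A$. Taking $\mu=1$ and $y=-z$ reduces this to the Jensen equation, which together with $h(0)=0$ yields additivity; letting $\mu$ vary gives $h(\mu x)=\mu h(x)$ for $\mu\in\mathbb{T}$, and the standard $\mu+\mu^{-1}=2\cos\theta$ argument, combined with additivity, upgrades this to $\mathbb{C}$-linearity. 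For the Jordan identity I would set $\mu=1$ and $x=y=z=w=0$, substitute $(3^n u,3^n v)$ for $(u,v)$, and divide by $9^n$; since $\phi$ grows by at most $(3L)^n$ under that scaling, the bound becomes $(L/3)^n\phi(0,0,0,u,v,0)\to 0$, while $f(9^n(uv+vu))/9^n\to h(uv+vu)$ and $f(3^n u)f(3^n v)/9^n\to h(u)h(v)$, giving $h(uv+vu)=h(u)h(v)+h(v)h(u)$.

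I expect the $*$-preserving property to be the most delicate step: with $x=y=z=u=v=0$, (3.1) only yields $\|f(w^*)-f(w)^*\|\le\phi(0,0,0,0,0,w)$ for $w\in U(A)\cup\{0\}$, and because $3^n w$ is no longer unitary the direct scaling recipe used for the other properties is unavailable. The plan here is to follow Theorem~1 of \cite{P-B-A}: deduce $h(w^*)=h(w)^*$ for $w\in U(A)$ from the approximate identity on $U(A)$ together with the $\mathbb{C}$-linearity of $h$ and the explicit formula $h=\lim_n f(3^n\,\cdot)/3^n$, and then extend to an arbitrary $x\in A$ by writing $x=\sum_j c_j u_j$ with $u_j\in U(A)$ (Kadison--Ringrose, Theorem 4.1.7) and invoking $\mathbb{C}$-linearity.
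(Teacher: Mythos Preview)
Your proposal is correct and follows essentially the same C\u{a}dariu--Radu fixed-point route as the paper: the same generalized metric space, the same operator $J(g)(x)=g(3x)/3$, the same estimate $d(f,Jf)\le L$, and the same scaling substitutions into (3.1) to establish additivity, $\mathbb{C}$-linearity, the Jordan identity, and the $*$-property. Your caution about the $*$-preserving step is well placed---the paper actually performs the direct scaling $w\mapsto 3^m w$ (glossing over the fact that $3^m w$ leaves $U(A)\cup\{0\}$) and then, exactly as you suggest, appeals to the unitary-decomposition argument of Theorem~2.1/\cite{P-B-A} to pass from $U(A)$ to all of $A$.
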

\begin{proof}
It follows from $\phi(x,y,z,u,v,w)\leq 3L
\phi(\frac{x}{3},\frac{y}{3},\frac{z}{3},\frac{u}{3},\frac{v}{3},\frac{w}{3})$
that
$$lim_j 3^{-j}\phi(3^jx,3^jy,3^jz,3^ju,3^jv,3^jw)=0 \eqno(3.3)$$ for all $x,y,z,u,v,w\in
A.$\\
Put $ y=z=w=u=0$ in (3.1) to obtain
$$\|3f(\frac{x}{3})-f(x)\|\leq \phi(x,0,0,0,0,0)\eqno (3.4)$$
for all $x\in A.$ Hence,
$$\|\frac{1}{3}f(3x)-f(x)\|\leq \frac{1}{3} \phi(3x,0,0,0,0,0)\leq L\phi(x,0,0,0,0,0)\eqno (3.5)$$
for all $x\in A.$\\
Consider the set $X:=\{g\mid g:A\to B\}$ and introduce the
generalized metric on X:
$$d(h,g):=inf\{C\in \Bbb R^+:\|g(x)-h(x)\|\leq C\phi(x,0,0,0,0,0) \forall x\in A\}.$$
It is easy to show that $(X,d)$ is complete. Now we define  the
linear mapping $J:X\to X$ by $$J(h)(x)=\frac{1}{3}h(3x)$$ for all
$x\in A$. By Theorem 3.1 of \cite{C-R}, $$d(J(g),J(h))\leq Ld(g,h)$$
for
all $g,h\in X.$\\
It follows from (2.5) that  $$d(f,J(f))\leq L.$$ By Theorem 3.1, $J$
has a unique fixed point in the set $X_1:=\{h\in X: d(f,h)<
\infty\}$. Let $h$ be the fixed point of $J$. $h$ is the unique
mapping with
$$h(3x)=3h(x)$$ for all $x\in A$ satisfying there exists $C\in
(0,\infty)$ such that
$$\|h(x)-f(x)\|\leq C\phi(x,0,0,0,0,0)$$ for all $x\in A$. On the other hand we
have $lim_n d(J^n(f),h)=0$. It follows that
$$lim_n\frac{1}{3^n}{f(3^nx)}=h(x)\eqno (3.6)$$
for all $x\in A$. It follows from $d(f,h)\leq
\frac{1}{1-L}d(f,J(f)),$ that $$d(f,h)\leq \frac{L}{1-L}.$$ This
implies the inequality (3.2). It follows from (3.1), (3.3) and (3.6)
that
\begin{align*}\|&3h(\frac{x+y+z}{3})+ h(\frac{x-2y+z}{3})+h(\frac{x+y-2z}{3})- h( x)\| \\
&=lim_n
\frac{1}{3^n}\|f(3^{n-1}(x+y+z))+f(3^{n-1}(x-2y+z))+f(3^{n-1}(x+y-2z))-f(3^nx)\|\\
&\leq lim_n\frac{1}{3^n}\phi(3^nx,3^ny,,3^nz,0,0,0)\\
&=0
\end{align*}
for all $x,y,z \in A.$ So $$h(\frac{x+y+z}{3})+ h(\frac{x-2y+z}{3})+
h(\frac{x+y-2z}{3})= h( x)$$ for all $x,y,z \in A.$ Put
$w=\frac{x+y+z}{3}, t=\frac{x-2y+z}{3}$ and $s=\frac{x+y-2z}{3}$ in
above equation, we get $h(w+t+s)=h(w)+h(t)+h(s)$ for all $w,t,s\in
A.$ Hence, $h$ is Cauchy additive.
 By putting $y=z=x$, $v=w=0$ in (2.1), we have
$$\|\mu f(\frac {3\mu x}{3})-\mu f( x)\|\leq \phi(x,x,,x,0,0,0)$$
for all $x\in A$. It follows that
$$\|h(\mu x)-\mu h(x)\|=lim_m
\frac{1}{3^m}\|f(\mu 3^m x)-\mu f(3^m x)\|\leq
lim_m\frac{1}{3^m}\phi(3^mx,3^mx,3^mx,0,0,0)=0$$ for all $\mu \in
\Bbb T$, and all $x\in A.$ One can show that the mapping $h:A\to B$
is $\Bbb C-$linear.  By putting $x=y=z=u=v=0$ in (2.1) it follows
that
\begin{align*}\|&h(w^*)-(h(w))^*\|\\
&=lim_m\|\frac{1}{3^{m}}f((3^{m}w)^*)-\frac{1}{3^{m}}(f(3^{m}w))^*\|\\
&\leq lim_m \frac{1}{3^{m}}\phi(0,0,0,0,0,3^mw)\\
&=0
\end{align*}
for all $w \in U(A).$ By the same reasoning as the proof of Theorem
2.1, we can show that $h:A\to B$ is $*-$preserving.

 Since $h$ is $\Bbb C-$linear, by putting
$x=y=z=w=0$ in (2.1) it follows that
\begin{align*}\|&h(uv+vu)-h(v)h(v)-h(v)h(u)\|\\
&=lim_m\|\frac{1}{9^{m}}f(9^{m}(uv+vu))-\frac{1}{9^{m}}[f(3^{m}u)f(3^{m}v)+f(3^{m}v)f(3^{m}u)]\|\\
&\leq lim_m \frac{1}{9^{m}}\phi(0,0,0,3^mu,3^mv,0)\leq lim_m
\frac{1}{3^{m}}\phi(0,0,0,3^mu,3^mv,0)\\ &=0
\end{align*}
for all $u,v \in A.$  Thus $h:A\to B$ is Jordan $*-$homomorphism
satisfying (3.2), as desired.

\end{proof}
We prove the following Hyers--Ulam--Rassias stability problem for
Jordan $*-$homomorphisms on unital $C^*-$algebras.

\begin{cor}\label{t4}
Let $p\in (0,1), \theta \in [0,\infty)$ be real numbers. Suppose
$f:A \to A$ satisfies $$ \|\|f(\frac{\mu x+\mu y+\mu
z}{3})+f(\frac{\mu x-2\mu y+\mu z}{3})+f(\frac{\mu x+\mu y-2\mu
z}{3})- \mu f(x)+f(uv+uv)$$ $$-f(v)f(u)-f(u)f(v)+f(w^*)-f(w)^*\|\|
\}\leq \theta(\|x\|^p+\|y\|^p+\|z\|^p+\|u\|^p+\|v\|^p+\|w\|^p),$$
for all $\mu \in \Bbb T$ and all $x,y,z,u,v \in A, w\in U(A)\cup
\{0\}.$  Then there exists a unique
 Jordan $*-$homomorphism
$h:A\to B$ such that such that
$$\|f(x)-h(x)\| \leq \frac{3^p\theta}{3-3^p}\|x\|^p$$
for all $x \in A.$
\end{cor}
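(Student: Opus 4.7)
The plan is to derive this corollary as a direct specialization of Theorem 3.2, by choosing the control function $\phi$ to be the obvious $p$-power sum and verifying that its homogeneity properties supply a contraction constant $L<1$. Concretely, I would set
$$\phi(x,y,z,u,v,w) := \theta\bigl(\|x\|^p+\|y\|^p+\|z\|^p+\|u\|^p+\|v\|^p+\|w\|^p\bigr)$$
for all $x,y,z,u,v,w\in A$. With this choice the hypothesis on $f$ in the statement becomes exactly inequality (3.1), so I only need to check the contraction condition required by Theorem 3.2.

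Next, I would exploit the $p$-homogeneity of each $\|\cdot\|^p$ term. Replacing every argument of $\phi$ by one-third of itself pulls a single factor of $3^{-p}$ outside, so
$$\phi\bigl(\tfrac{x}{3},\tfrac{y}{3},\tfrac{z}{3},\tfrac{u}{3},\tfrac{v}{3},\tfrac{w}{3}\bigr)=3^{-p}\phi(x,y,z,u,v,w).$$
Consequently, choosing $L:=3^{p-1}$ gives $3L\,\phi(\tfrac{x}{3},\dots,\tfrac{w}{3})=3^{p}\,3^{-p}\phi(x,\dots,w)=\phi(x,\dots,w)$, so the required inequality $\phi(x,y,z,u,v,w)\le 3L\,\phi(\tfrac{x}{3},\tfrac{y}{3},\tfrac{z}{3},\tfrac{u}{3},\tfrac{v}{3},\tfrac{w}{3})$ holds with equality, and $L<1$ since $p<1$.

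With the hypotheses of Theorem 3.2 verified, I would invoke it to obtain a unique Jordan $*$-homomorphism $h:A\to B$ such that $\|f(x)-h(x)\|\le\frac{L}{1-L}\phi(x,0,0,0,0,0)=\frac{3^{p-1}}{1-3^{p-1}}\theta\|x\|^p$ for all $x\in A$. The only remaining task is the algebraic simplification
$$\frac{3^{p-1}}{1-3^{p-1}}=\frac{3^{p-1}}{(3-3^{p})/3}=\frac{3^{p}}{3-3^{p}},$$
which yields precisely the bound $\|f(x)-h(x)\|\le\frac{3^{p}\theta}{3-3^{p}}\|x\|^p$ stated in the corollary.

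There is no real obstacle here: the entire argument is a bookkeeping exercise in fitting the $p$-power majorant into the abstract framework of Theorem 3.2. The only minor point to be attentive to is ensuring that $L=3^{p-1}$ is actually less than $1$, which is the sole place where the hypothesis $p<1$ is needed (and is also exactly what keeps the final denominator $3-3^{p}$ positive).
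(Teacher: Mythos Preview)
Your proposal is correct and follows exactly the same approach as the paper: define $\phi(x,y,z,u,v,w):=\theta(\|x\|^p+\|y\|^p+\|z\|^p+\|u\|^p+\|v\|^p+\|w\|^p)$, take $L=3^{p-1}$, and apply Theorem~3.2. You have simply filled in the homogeneity check and the algebraic simplification that the paper leaves implicit.
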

\begin{proof}
Setting
$\phi(x,y):=\theta(\|x\|^p+\|y\|^p+\|z\|^p+\|u\|^p+\|v\|^p+\|w\|^p)$
all $x,y,z,u,v,w \in A.$ Then by $L=3^{p-1}$ in Theorem 3.2,  one
can prove the result.
\end{proof}

{\small

}

\begin{thebibliography}{99}
\bibitem{B1} R. Badora, \textit{On approximate ring homomorphisms}, J. Math. Anal.
Appl. \textbf{276}, 589--597 (2002).

\bibitem{B-L-Z} J. Baker, J. Lawrence and F. Zorzitto, \textit{The stability of the
equation $f(x+y)=f(x)f(y)$}, Proc. Amer. Math. Soc. \textbf{74}
(1979), no. 2, 242--246.

\bibitem{Bo} D.G. Bourgin, \textit{Approximately isometric and multiplicative
transformations on continuous function rings}, Duke Math. J.
\textbf{16}, (1949). 385--397.


\bibitem{B-P} L. Brown and G. Pedersen, \textit{$C^*-$algebras of real rank zero,} J. Funct. Anal. \textbf{99} (1991) 131.149.

\bibitem{C-R} L. C\u adariu and V. Radu, \textit{On the stability of the Cauchy functional equation:
a fixed point approach,} { Grazer Mathematische Berichte}
\textbf{346} (2004),  43--52.



\bibitem{C-R1} L. C$\breve{a}$dariu, V. Radu, \textit{The fixed points method for the stability of some functional
equations,} Carpathian Journal of Mathematics \textbf{23} (2007),
 63--72.




\bibitem{C-R2} L. C$\breve{a}$dariu, V. Radu, \textit{Fixed points and the stability of quadratic
functional equations,} Analele Universitatii de Vest din Timisoara
\textbf{41} (2003),  25--48.


\bibitem{C-R3} L. C$\breve{a}$dariu and V. Radu, \textit{Fixed points and the stability of Jensen's functional
equation,} { J. Inequal. Pure Appl. Math.} \textbf{ 4 } (2003), Art.
ID 4.




\bibitem{Cz} S. Czerwik, \textit{Functional Equations and Inequalities in Several Variables},
World Scientific, River Edge, NJ, 2002.


\bibitem{H} D.H. Hyers, \textit{On the stability of the linear functional equation},
Proc. Nat. Acad. Sci. U.S.A. \textbf{27} (1941), 222--224.

\bibitem{H-R} D.H. Hyers and Th.M. Rassias, \textit{Approximate
homomorphisms}, Aequationes Math. \textbf{44} (1992), no. 2-3,
125--153.

\bibitem{H-I-R} D.H. Hyers, G. Isac and Th.M. Rassias, \textit{Stability of Functional Equations in Several Variables},
Birkh\"auser, Basel, 1998.


\bibitem{J} B.E. Johnson, \textit{Approximately multiplicative maps between Banach
algebras,}
J. London Math. Soc. (2) \textbf{37} (1988) 294–316.


\bibitem{J-L} K. Jun, Y. Lee, \textit{A generalization of the Hyers–Ulam–Rassias stability
of Jensen's equation,} J. Math. Anal. Appl. \textbf{238} (1999)
305–315.


\bibitem{J2} S.-M. Jung, \textit{Hyers--Ulam--Rassias Stability of Functional
Equations in Mathematical Analysis}, Hadronic Press, Palm Harbor,
2001.

\bibitem{K-R} R. V. Kadison, J. R. Ringrose, \textit{Fundamentals of the Theory of Operator
Algebras, Elementary Theory,} Academic Press, New York, 1983.

\bibitem{M-T-N} T. Miura, S.-E. Takahasi and  G. Hirasawa, \textit{Hyers--Ulam--Rassias
 stability of Jordan homomorphisms on Banach
algebras,} J. Inequal. Appl. 2005, no. 4, 435--441.


\bibitem{P2} C. Park, \textit{Hyers-Ulam-Rassias stability of homomorphisms in
quasi-Banach algebras}, Bull. Sci. Math. \textbf{132} (2008), no. 2,
87--96.

\bibitem{P-B-A} C. Park, D.-H. Boo and J.-S. An, \textit{Homomorphisms between
 $C\sp *$-algebras and linear derivations on $C\sp *$-algebras,} J. Math. Anal. Appl. \textbf{337} (2008), no. 2, 1415--1424.


\bibitem{P4} C. Park, \textit{Homomorphisms between Poisson JC*-algebras}, { Bull. Braz. Math. Soc.} \textbf{36} (2005) 79-97.
MR2132832 (2005m:39047)

\bibitem{P-P} C. Park, W. Park, \textit{ On the Jensen's equation in Banach
modules,} Taiwanese J. Math. \textbf{6} (2002) 523–531.

\bibitem{P-R} C. Park and J. M. Rassias, \textit{Stability of the Jensen-type functional equation in $C^*-$algebras:
a fixed point approach,}  Abstract and Applied Analysis Volume 2009,
Article ID 360432, 17 pages.

\bibitem {Ra} V. Radu, \textit{The fixed point alternative and the stability of functional
equations,}
{ Fixed Point Theory} \textbf{4} (2003),  91--96.

\bibitem{R1} Th.M. Rassias, \textit{On the stability of the linear mapping in Banach
spaces,}
 { Proc. Amer. Math. Soc.} \textbf{72} (1978) 297--300.

\bibitem{R2} Th.M. Rassias, \textit{On the stability of functional equations
and a problem of Ulam}, Acta Appl. Math. \textbf{62} (2000), no. 1,
23--130.


\bibitem {Ru} I.A. Rus, \textit{ Principles and Applications of Fixed Point Theory,} Ed.
Dacia, Cluj-Napoca, 1979 (in Romanian).


\bibitem{U} S. M. Ulam, \textit{Problems in Modern Mathematics,}{
Chapter VI, science ed. Wiley, New York,} 1940.



\end{thebibliography}
\end{document}